\newtheorem{theo}{Theorem}[section]
\newtheorem{lemma}[theo]{Lemma}
\newtheorem{propo}[theo]{Proposition}
\newtheorem{defi}[theo]{Definition}
\newtheorem{coro}[theo]{Corollary}
\newtheorem{rem}[theo]{Remark}
\newtheorem{exam}[theo]{Example}
\newcommand\id{\operatorname{id}}
\newcommand\Id{\operatorname{Id}}
\newcommand\Ab{\operatorname{\bf Ab}}
\newcommand\Alg{\operatorname{Alg}}
\newcommand\Coalg{\operatorname{Coalg}}
\newcommand\dom{\operatorname{dom}}
\newcommand\cod{\operatorname{cod}}
\newcommand\comp{\operatorname{comp}}
\newcommand\cc{\mathcal {C}}
\newcommand\cf{\mathcal {F}}
\newcommand\cj{\mathcal {J}}
\newcommand\ck{\mathcal {K}}
\newcommand\cl{\mathcal {L}}
\newcommand\crr{\mathcal {R}}
\newcommand\cp{\mathcal {P}}
\newcommand\cw{\mathcal {W}}
\newcommand\cx{\mathcal {X}}
\date{July 17, 2015}
\begin{document}
\title[Accessible model categories]
{Accessible model categories}
\author[J. Rosick\'{y}]
{J. Rosick\'{y}$^{*}$}
\thanks{$^{*}$  Supported by the Grant agency of the Czech republic under the grants 201/11/0528 and P201/12/G028.} 
\address{
\newline 
Department of Mathematics and Statistics\newline
Masaryk University, Faculty of Sciences\newline
Kotl\'{a}\v{r}sk\'{a} 2, 611 37 Brno, Czech Republic\newline
rosicky@math.muni.cz
}
 
\begin{abstract}
We prove that a weak factorization system on a locally presentable category is accessible if and only if it is small generated in the sense
of R. Garner. Moreover, we discuss an analogy of Smith's theorem for accessible model categories.
\end{abstract} 
\keywords{ }
\subjclass{ }

\maketitle

\section{Introduction}
A weak factorization system on a category $\ck$ consists of two classes of morphisms $\cl$ and $\crr$ related by the lifting property
(i.e., $\crr=\cl^\square$ and $\cl={}^\square\crr$) and such that any morphism $f$ of $\ck$ has a factorization $f=Rf\cdot Lf$ with $Lf\in\cl$ 
and $Rf\in\crr$. Mostly, this factorization is functorial -- giving functors $L,R:\ck^2\to\ck^2$ on the category of $\ck$-morphisms. In particular, 
it happens when $\ck$ is locally presentable and $\cl$ is generated by a set of morphisms. In this case, we get an accessible functorial factorization 
in the sense that $L$ and $R$ are accessible functors. But there are important examples of accessible weak factorization systems on a locally presentable
category which are not generated in this sense. Our example is on abelian groups when $\cl$ consists of split monomorphisms (which can be extended 
to $R$-modules over a ring $R$ where $\lambda$-pure epimorphisms never coincide with split epimorphisms). But there are more examples (e.g., 
\cite{Ri} 4.4). We will show that accessible weak factorization systems on a locally presentable category are precisely those which are generated 
by a small category $\cc$ of morphisms in the sense of \cite{G}. This makes possible to define $\cc^\boxplus$ by using coherent liftings. R. Garner 
\cite{G} introduced this concept in order to understand natural weak factorization systems of M. Grandis and W. Tholen \cite{GT}. Here, $L$ is a comonad,
$R$ is a monad and $\boxplus$ relates algebras for a monad $R$ with coalgebras for a comonad $L$. We stay in the context of functorial weak factorization 
systems where $L$ is only copointed and $R$ is pointed and use $\boxplus$ to relate $L$-coalgebras with $R$-algebras. Then the main result of \cite{G}
that a small category $\cc$ generates a natural weak factorization systems makes possible to obtain our characterization of accessible weak factorization
systems.

An accessible model category is a model category on a locally presentable category $\ck$ where the both weak factorization systems are accessible.
A natural question is when an accessible weak factorization system $(\cl,\crr_0)$ on $\ck$ and a class $\cw$ yield a model category where $\cl$ 
are cofibrations and $\cw$ are weak equivalences. If $(\cl,\crr_0)$ is cofibrantly generated then the answer is given by Smith's theorem (see \cite{B})
and the resulting model category is combinatorial, which means that the second weak factorization system is cofibrantly generated as well.  

The author's interest in accessible weak factorization systems which are not cofibrantly generated was refreshed by a discussion with Emily Riehl
in July 2014. In particular, she wished Smith's theorem for algebraic model categories. Recent work \cite{BG} introduced generation by a small double
category and proved that an algebraic weak factorization system is generated in this sense if and only if it is accessible. Note that an algebraic
factorization system $(L,R)$ does not need to be a weak factorization system because the classes of morphisms underlying coalgebras for the comonad $L$
and algebras for the monad $R$ are not necessarily closed under retracts. By taking the closure under retracts, one gets the underlying weak factorization
system $(L,R)$.

\section{Functorial weak factorization systems}
Let $\ck$ be a category and $f: A\to B$, $g: C\to D$ morphisms
such that in each commutative square
$$
\xymatrix@=3pc{
A \ar[r]^{u} \ar[d]_{f}& C \ar[d]^g\\
B\ar[r]_v & D
}
$$
there is a diagonal $d:B \to C$ with $df=u$ and $gd=v$.
Then we say that $g$ has the \textit{right lifting property}
w.r.t. $f$ and $f$ has the \textit{left lifting property} w.r.t.
$g$. 
For a class $\cx$ of morphisms of $\ck$ we put
\begin{align*}
\cx^{\square}& = \{g|g \ \mbox{has the right lifting property
w.r.t.\ each $f\in \cx$\} and}\\
{}^\square\cx & = \{ f|f \ \mbox{has the left lifting property
w.r.t.\ each $g\in \cx$\}.}
\end{align*}

\begin{defi}\label{def2.1}
{\em    
A \textit{weak factorization system} $(\cl,\crr)$ in a
category $\ck$ consists of two classes $\cl$ and $\crr$ of morphisms
of $\ck$ such that
\begin{enumerate}
\item[(1)] $\crr = \cl^{\square}$, $\cl = {}^\square \crr$, and
\item[(2)] any morphism $h$ of $\ck$ has a factorization $h=gf$ with
$f\in \cl$ and $g\in \crr$.
\end{enumerate}

A weak factorization system $(\cl,\crr)$ is called \textit{cofibrantly generated} if there is a set
$\cx$ of morphisms such that $\crr=\cx^\square$.
}
\end{defi}

There are various definitions of a functorial weak factorization system, we will follow \cite{Ri}. In what follows,
$\ck^2$ denotes the category of morphisms and $\ck^3$ the category of composable pairs of morphisms of $\ck$.
There are the domain, the codomain and the composition functors $\dom:\ck^2\to\ck$, $\cod:\ck^2\to\ck$ and $\comp:\ck^3\to\ck^2$.

\begin{defi}\label{def2.2}
{
\em
A \textit{functorial factorization} is a functor $E:\ck^2\to\ck^3$ such that
$$
\comp\cdot E=\Id.
$$
}
\end{defi}

\begin{rem}\label{re2.3}
{
\em
(1) Let $p_1:\ck^3\to\ck^2$ and $p_2:\ck^3\to\ck^2$ be the projections sending the composable pair to its first and the second morphism.
Then $E$ determines a pair of functors $L=p_1\cdot E:\ck^2\to\ck^2$ and $R=p_2\cdot E:\ck^2\to\ck^2$ and the functorial factorization of $f$ 
is $f=Rf\cdot Lf$. The middle object of the composable pair $Ef$ will be denoted as $E_0f$. With this notation, the functorial factorization $E$ 
sends a commutative square
$$
\xymatrix@=3pc{
A \ar[r]^{u} \ar[d]_{f}& C \ar[d]^g\\
B\ar[r]_v & D
}
$$
to a commutative rectangle
$$
\xymatrix@=3pc{
A \ar[r]^{u} \ar[d]_{Lf} & C\ar[d]^{Lg}\\
E_0f \ar[r]^{E(u,v)} \ar[d]_{Rf} & E_0g
\ar[d]^{Rg}\\
B\ar[r]_{v} &D
}
$$ 

(2) The functor $L$ is \textit{copointed}, the natural transformation $\varepsilon:L\to\Id$ has components $\varepsilon_f=(\id,Rf)$. Dually,
$R$ is \textit{pointed} by $\rho:\Id\to R$, $\rho_f=(Lf,\id)$. An $L$-\textit{coalgebra} is a pair $(f,(u,v))$ where $f$ is a $\ck$-morphism 
and $(u,v):f\to Lf$ is a morphism in $\ck^2$ such that $\varepsilon_f(u,v)=\id_f$. Thus an $L$-coalgebra reduces to a pair $(f,s)$ such that
$sf=Lf$ and $Rf\cdot s=\id_B$. Morphisms of $L$-coalgebras $(f,s)\to (f',s')$ are $\ck^2$-morphism $(x,y):f\to f'$ such that $E(x,y)s=s'y$.
The category $\Coalg(L)$ of $L$-coalgebras is equipped with the forgetful functor $\Coalg(L)\to\ck^2$ whose image is $\cl$. In fact, given
$f\in\cl$ then $s$ is obtained by the lifting of $f$ with respect to $Rf$.
Dually, we have the category $\Alg(R)$ of $R$-\textit{algebras} and $\Alg(R)\to\ck^2$ whose image is $\crr$.
In particular, an $R$-algebra reduces to a pair $(g,t)$ such that $gt=Rg$ and $tLg=\id$. The forgetful functors are faithful, which means 
that $\Coalg(L)$ and $\Alg(R)$ are \textit{concrete categories} over $\ck^2$. 

If $\ck$ is a cocomplete category then $\Coalg(L)$ is cocomplete and the forgetful functor to $\ck^2$ preserves colimits. Dually, if $\ck$ is complete,
$\Alg(R)$ is complete and the forgetful functor preserves limits.

(3) Having a general concrete category $\cx$ over $\ck^2$, the underlying morphism of $X\in\cx$ will be denoted as $|X|$. Morphisms $X\to Y$
of $\cx$ will be identified with the underlying morphisms $(u,v):|X|\to |Y|$.

(4) Given $f:A\to B$, consider the morphism $\varepsilon_f:Lf\to f$
$$
\xymatrix@=3pc{
A \ar[r]^{\id} \ar[d]_{LLf} & A\ar[d]^{Lf}\\
E_0Lf \ar[r]^{E(\id,Rf)} \ar[d]_{RLf} & E_0f
\ar[d]^{Rf}\\
E_0f\ar[r]_{Rf} &B
}
$$ 
}
\end{rem}

\begin{defi}\label{def2.4}
{
\em
A weak factorization system $(\cl,\crr)$ is called \textit{functorial} if it has a functorial factorization with $Lf\in\cl$ and $Rf\in\crr$
for all $f$.
}
\end{defi}

Of course, a weak factorization system can have many functorial factorizations.

\begin{defi}\label{def2.5}
{
\em
A weak factorization system will be called \textit{left accurate} if it has a functorial factorization such that $E(\id,Rf)=RLf$ for each $f$ 
in $\ck^2$. Dually, we define \textit{right accurate} functorial weak factorization systems. A functorial weak factorization system is \textit{accurate} 
if it is both left and right accurate.
}
\end{defi}

\begin{lemma}\label{le2.6}
Any cofibrantly generated weak factorization system in a locally presentable category is functorial and left accurate.
\end{lemma}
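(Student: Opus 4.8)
The plan is to build the required factorization by the small object argument and then to read off accuracy as a well-pointedness condition. Fix a set $\cx$ of morphisms with $\crr=\cx^\square$; since $\ck$ is locally presentable we have $\cl={}^\square\crr=\cof(\cx)$, the retract-closure of the transfinite composites of pushouts of coproducts of maps in $\cx$. Choose a regular cardinal $\lambda$ such that $\ck$ is locally $\lambda$-presentable and every domain of a map in $\cx$ is $\lambda$-presentable.

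First I would run the small object argument to length $\lambda$: for $f\colon A\to B$ put $A_0=A$ and let $A_{i+1}$ be the pushout attaching, for each commutative square from a map of $\cx$ to the map $A_i\to B$, a copy of that generator; set $E_0f=\colim_{i<\lambda}A_i$, with $Lf\colon A\to E_0f$ and $Rf\colon E_0f\to B$ the canonical maps. Because pushouts, coproducts and the $\lambda$-filtered colimit are functorial in $\ck^2$, this defines a functor $E\colon\ck^2\to\ck^3$ with $\comp\cdot E=\Id$. By construction $Lf$ is a transfinite composite of pushouts of coproducts of maps in $\cx$, hence $Lf\in\cof(\cx)=\cl$; and $\lambda$-presentability of the domains forces every square from $\cx$ to $Rf$ to factor through some $A_i$, yielding a diagonal, so $Rf\in\cx^\square=\crr$. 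Thus $E$ is a functorial weak factorization system in the sense of Definition \ref{def2.4}.

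The genuine content is left accuracy, and here I would first reformulate the defining equation. The map $E(\id,Rf)$ is the codomain component of $L\varepsilon_f\colon LLf\to Lf$, while $RLf$ is the codomain component of $\varepsilon_{Lf}\colon LLf\to Lf$; as both have domain component $\id_A$, the condition $E(\id,Rf)=RLf$ is precisely $L\varepsilon_f=\varepsilon_{Lf}$, i.e.\ that the copointed endofunctor $(L,\varepsilon)$ is \emph{well-copointed}, $L\varepsilon=\varepsilon L$. The construction of the previous paragraph only delivers the weaker identity $Rf\cdot E(\id,Rf)=Rf\cdot RLf$, coming from commutativity of the lower square of the rectangle in Remark \ref{re2.3}(4), so the plan is to replace $E$ by a well-copointed factorization with the same left class $\cl$: I would pass to the fixed point of $(L,\varepsilon)$ by iterating $\varepsilon$ transfinitely, which converges since $L$ is accessible on the locally presentable category $\ck^2$, and at the fixed point the factorization of $Lf$ has trivial right factor, forcing $E(\id,Rf)=RLf$. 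The main obstacle is exactly this last step: one must control how the cells attached when factoring $Lf$ are identified with the cells already present in $E_0f$, so that the comparison map $E(\id,Rf)$ is pinned to $RLf$ on the nose rather than merely agreeing after post-composition with $Rf$.
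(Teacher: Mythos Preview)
Your construction of the functorial factorization via the small object argument matches the paper's. Where you diverge is in the treatment of left accuracy: the paper does not pass to any fixed point, but argues directly that for the factorization produced by the small object argument both $E(\id,Rf)$ and $RLf$ arise as the unique induced morphism out of the colimit $E_0(Lf)$, and are therefore equal. Whether or not one finds that one-line justification fully satisfying, that is the paper's route, and it involves no auxiliary iteration of $\varepsilon$.

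Your own route has a genuine gap. ``Passing to the fixed point of $(L,\varepsilon)$ by iterating $\varepsilon$ transfinitely'' is, at best, the cofree-comonad construction on the copointed endofunctor $(L,\varepsilon)$; but that is a \emph{limit} of the tower $\cdots\to L^{2}\to L\to\Id$, and you give no argument that the resulting left factor still lies in $\cl$ (which is closed under the colimit-type operations of ${}^\square\crr$, not under limits) or that the accompanying right factor still lies in $\crr$. Accessibility of $L$ alone does not guarantee convergence of such a tower either. You explicitly flag this step as ``the main obstacle'' and leave it unresolved, so as written the argument does not establish left accuracy. If you want a construction that is transparently left (indeed also right) accurate without having to verify the colimit comparison by hand, the paper's Remark~\ref{re2.7} points to Garner's version of the small object argument, which produces an accurate functorial factorization by design.
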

\begin{proof}
We can assume that $\ck$ be locally $\lambda$-presentable and $(\cl,\crr)$ is cofibrantly generated by a set $\cx$ of morphisms
having $\lambda$-presentable domains and codomains. For $f$ in $\ck^2$, consider the diagram consisting of all spans
$$
\xymatrix@=3pc{
A &\\
X \ar [u]^{u} \ar [r]_{h} & Y
}
$$
such that $h\in\cx$ and there exists $v:Y\to B$  with $vh=fu$. Let $E_{00}f$ be a colimit of this diagram and $\rho_0:\Id\to R_0$ be the pointed functor 
$R_0:\ck^2\to\ck^2$ such that $(\rho_0)_f:A\to E_{00}f$ is the component of a colimit cocone and $R_0f:E_{00}f\to B$ be the induced morphism. 
Consider the chain of iterations
$$
\Id \xrightarrow{\quad\rho_0\quad} R_0
             \xrightarrow{\quad\rho_0R_0\quad} R_0^2\xrightarrow{\quad\rho_0R_0^2\quad}\dots 
$$
where we take colimits in limit steps. We get morphisms $R_0^\alpha f:E_{0\alpha}f\to B$.
Let $R=R_0^\lambda$ and $E_0f=E_{0\lambda}f$ be the domain of $Rf$. Let $Lf:A\to E_0f$ be $f$-component of $\Id\to R$.
Since any morphism $u:X\to E_0f$, where $h:X\to Y$ is in $\cx$, factorizes through some $E_{0\alpha}$, $\alpha<\lambda$, $(L,R)$ is a functorial factorization 
of $(\cl,\crr)$. Moreover, since both $E(\id,Rf)$ and $RLf$ are induced morphisms from a colimit, $(L,R)$ is left accurate. 
\end{proof}

\begin{rem}\label{re2.7}
{
\em
The functorial factorization from \ref{le2.6} corresponds to the usual small object argument (see \cite{AHRT1} or \cite{G}).
This functorial factorization is not right accurate because at steps $\alpha^+$, $\alpha>0$, one adds new solutions to old lifting
problems and $E(L_\alpha f,\id)$ sends old solutions to new ones while $L_\alpha R_\alpha f$ sends them to old ones. This is repaired
by Garner's modification of the small object argument which adds coequalizers of $E(L_\alpha f,\id)$ and $L_\alpha R_\alpha f$ (see
\cite{G} 6.5). The resulting functorial factorization remains left accurate but is right accurate as well. Thus any cofibrantly
generated weak factorization system in a locally presentable category is accurate.
}
\end{rem}

%\begin{defi}\label{def2.8}
%{
%\em
%A functorial weak factorization system will be called \textit{left equipped} if there is a natural transformation $\delta:L\to LL$ such that
%$(L,\varepsilon,\delta)$ is a comonad.
%}
%\end{defi}

%\begin{lemma}\label{le2.9}
%Any cofibrantly generated weak factorization system is left equipped.
%\end{lemma}
%\begin{proof}
%In the functorial factorization from \ref{le2.6}, $L_0f$ is obtained by a colimit of spans $(u,h)$ for $f$ from the proof of \ref{le2.6}.
%Since each such span canonically yields a span for $L_0f$, we get a natural transformation $\delta_0:L_0\to L_0L_0$. Having 
%$\delta_\alpha:L_\alpha\to L_\alpha L_\alpha$, we get $\delta_{\alpha +1}$ as the composition
%$$
%L_{\alpha +1}=L_0L_\alpha\xrightarrow{\quad \delta_0L_\alpha\quad} L_0L_0L_\alpha=L_0L_{\alpha +1}\to L_{\alpha +1}L_{\alpha +1}.
%$$
%In this way, we get $\delta:L\to LL$. By the construction of $E_0f$ as a colimit, the comonad axioms are satisfied.
%\end{proof}

\begin{exam}\label{ex2.8}
{
\em
Let $\Ab$ be the category of abelian groups. Let $\cl$ consist of split monomorphisms and $\crr$ of split epimorphisms.
Following \cite{RT} 2.7, $(\cl,\crr)$ is a weak factorization system in $\Ab$. This factorization system is functorial:
given $f:A\to B$, we put $Lf=\langle\id_A,f\rangle:A\to A\times B$ and $Rf=p_2:A\times B\to B$ (see \cite{AHRT} 1.6). We will show
that $(\cl,\crr)$ is not cofibrantly generated.

Assume that $(\cl,\crr)$ is cofibrantly generated. Then there is a regular cardinal $\lambda$ such that split epimorphisms
are closed under $\lambda$-filtered colimits. This means that $\lambda$-pure epimorphisms split (see \cite {AR1} Proposition 3).
Consequently, each abelian group is a retract of a coproduct of $\lambda$-presentable abelian groups, which contradicts
\cite{CH} 5.10.

It is easy to see that this functorial factorization system is accurate.
}
\end{exam}
 
\section{Small generated weak factorization systems}
The following concept was introduced in \cite{G}.

\begin{defi}\label{def3.1}
{
\em
If $\cx$ is a concrete category over $\ck^2$ then $\cx^\boxplus$ will be a category whose objects are pairs $(g,\phi)$  where $g$ is
a $\ck$-morphism and $\phi$ is a \textit{lifting function} that assigns a diagonal $\phi(X,u,v)$ to each commutative square
$$
\xymatrix@=4pc{
A \ar[r]^{u} \ar[d]_{|X|}& C \ar[d]^g\\
B\ar[r]_v & D
}
$$
with $X$ in $\cx$. We require that $\phi$ is coherent with respect to $\cx$-morphisms in the sense that, for any $\cx$-morphism $(x,y):X_1\to X_2$,
we have
$$
\phi(X_1,ux,vy)=\phi(X_2,u,v)y.
$$

Morphisms $(z,t):(g_1,\phi_1)\to (g_2,\phi_2)$ are morphisms $(z,t):g_1\to g_2$ such that
$$
z\phi_1(X,u,v)=\phi_2(X,zu,tv)
$$
for each $(u,v):|X|\to g_1$. The forgetful functor $\cx^\boxplus\to\ck^2$ forgets lifting functions. Thus $\cx^\boxplus$ is a concrete category
over $\ck^2$.

Dually, we define ${}^\boxplus\cx$.
} 
\end{defi}

\begin{rem}\label{re3.2}
{
\em
(1) Let $\cx$ be a concrete category over $\ck^2$. Since $\boxplus$ is a Galois correspondence, we have
$$
\cx\subseteq {}^\boxplus(\cx^\boxplus)   
$$
and
$$
({}^\boxplus(\cx^\boxplus))^\boxplus=\cx^\boxplus.
$$
Moreover,
$$
|\cx^\boxplus|\subseteq |\cx|^\square.
$$

(2) Let $H:\cx_1\to\cx_2$ be a \textit{concrete functor}, i.e., a functor $H$ commuting with the forgetful functors. Then we get
a concrete functor
$$
H^\boxplus:\cx_2^\boxplus\to\cx_1^\boxplus
$$
such that
$$
H^\boxplus(g,\phi)=(g,\phi(HX,u,v)).
$$

In particular, given a functorial weak factorization system $(\cl,\crr)$, then any subcategory $\cc$ of $\Coalg(L)$ induces the functor 
$\Coalg(L)^\boxplus\to\cc^\boxplus$. This functor is faithful and it is an isomorphism provided that $\ck$ is cocomplete and $\cc$ is 
\textit{colimit}-\textit{dense} in $\Coalg(L)$, which means that any $L$-coalgebra is a colimit of a diagram in $\cc$.

(3) We have a concrete functor 
$$
\Gamma:\Alg(R)\to\Coalg(L)^\boxplus
$$
assigning to an $R$-algebra $(g,t)$ an object $(g,\phi)$ where 
$$
\phi((f,s),u,v)=tE(u,v)s
$$
for each $L$-coalgebra $(f,s)$.   

If $\ck^2$ is complete then $\Gamma$ preserves limits.

(4) Since
$$
\crr=|\Alg(R)|\subseteq|\Coalg(L)^\boxplus|\subseteq |\Coalg(L)|^\square=\cl^\square=\crr,
$$ 
we have
$$
|\Coalg(L)^\boxplus|=|\Coalg(L)|^\square.
$$
(5) Dually, we have a concrete functor
$$
\Delta:\Coalg(L)\to {}^\boxplus\Alg(R).
$$
and
$$
|^\boxplus\Alg(R)|={}^\square|\Alg(R)|.
$$
{}
}
\end{rem}

\begin{lemma}\label{le3.3}
Let $(\cl,\crr)$ be a left accurate weak factorization system. Then $\Gamma$ is a full embedding.
\end{lemma}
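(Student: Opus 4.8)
The plan is to establish separately that $\Gamma$ is faithful, injective on objects, and full, with the last two resting on a single ``recovery'' identity. Faithfulness is immediate: $\Gamma$ is a concrete functor between categories concrete over $\ck^2$ (Remark~\ref{re3.2}(3)), and any functor commuting with faithful forgetful functors is faithful. So the real content is to reconstruct the algebra structure $t$ from the lifting function that $\Gamma$ produces.

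The crux is the following. Fix an $R$-algebra $(g,t)$ with $g:C\to D$, so $t:E_0g\to C$, $gt=Rg$ and $tLg=\id_C$. Since $Lg\in\cl=|\Coalg(L)|$, choose an $L$-coalgebra structure $\sigma$ on $Lg$; in particular the counit law gives $R(Lg)\cdot\sigma=\id_{E_0g}$. Consider the commutative square $(\id_C,Rg):Lg\to g$, which is exactly $\varepsilon_g$ and commutes because $Rg\cdot Lg=g$. Writing $(g,\phi)=\Gamma(g,t)$, I claim
$$
\phi\bigl((Lg,\sigma),\id_C,Rg\bigr)=t.
$$
Indeed the left-hand side equals $t\cdot E(\id_C,Rg)\cdot\sigma$ by definition of $\Gamma$; left accuracy gives $E(\id_C,Rg)=RLg$, and then $RLg\cdot\sigma=\id_{E_0g}$ collapses the expression to $t$. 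This recovery identity forces injectivity on objects: if $\Gamma(g,t_1)=\Gamma(g,t_2)$ the common lifting function returns $t_1=t_2$ (and algebras over distinct underlying morphisms map to distinct objects).

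For fullness, let $(z,w):\Gamma(g_1,t_1)\to\Gamma(g_2,t_2)$ be a morphism of $\Coalg(L)^\boxplus$, so $(z,w):g_1\to g_2$ in $\ck^2$ and $z\,\phi_1(X,u,v)=\phi_2(X,zu,wv)$ for all data. The goal is the $R$-algebra morphism equation $z\,t_1=t_2\cdot E(z,w)$, after which $(z,w)$ is the $\Gamma$-image of a morphism of $R$-algebras. I would feed the coherence condition the same test square $X=(Lg_1,\sigma_1)$, $u=\id$, $v=Rg_1$: the left-hand side is $z\,t_1$ by recovery. For the right-hand side, note $(z,wRg_1)=(z,w)\circ\varepsilon_{g_1}$ as a composite $Lg_1\to g_1\to g_2$, so functoriality of $E$ together with left accuracy yields $E(z,wRg_1)=E(z,w)\cdot E(\id,Rg_1)=E(z,w)\cdot RLg_1$; inserting $\sigma_1$ and using $RLg_1\cdot\sigma_1=\id$ collapses the right-hand side to $t_2\cdot E(z,w)$. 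Comparing the two sides gives precisely $z\,t_1=t_2\cdot E(z,w)$.

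The main obstacle, conceptually, is spotting the correct universal test problem: a priori $\phi$ knows only liftings against arbitrary coalgebras, and one must isolate the single square $\varepsilon_g$ (the canonical factorization of $g$, with $Lg$ promoted to a coalgebra) whose lift reconstructs $t$. The technical linchpin is that left accuracy, $E(\id,Rf)=RLf$, is exactly what makes $E(\varepsilon_g)$ equal $RLg$, so that the coalgebra counit law $RLg\cdot\sigma=\id$ cancels it; without left accuracy $E(\id,Rg)$ need not be $RLg$ and the whole recovery breaks down.
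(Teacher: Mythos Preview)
Your proof is correct and follows essentially the same route as the paper: faithfulness from concreteness, and then testing the coherence condition on the specific coalgebra $(Lg,\sigma)$ with the square $\varepsilon_g=(\id,Rg)$, using left accuracy $E(\id,Rg)=RLg$ together with the coalgebra counit $RLg\cdot\sigma=\id$ to collapse both sides. The paper is terser---it does not isolate your ``recovery identity'' and deduces injectivity on objects from the fullness computation specialized to $(\id,\id)$---but the substance is identical.
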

\begin{proof}
As a concrete functor, $\Gamma$ is faithful. Consider a morphism $(x,y):\Gamma(g,t)\to \Gamma(g',t')$. This means that for any $L$-coalgebra
$(f,s)$ and any morphism $(u,v):f\to g$ in $\ck^2$, we have $xtE(u,v)s=t'E(xu,yv)s$. Choose $f=Lg$, $u=\id$ and $v=Rg$. Since $(L,R)$ is left accurate,
$E(u,v)s=RLg\cdot s=\id$. Hence $xt=t'E(x,y)$, which means that $(x,y)$ is a morphism of $R$-algebras. Thus $\Gamma$ is full.

Finally, assume that $\Gamma(g,t)=\Gamma(g',t')$. Then $(\id,\id):(g,t)\to(g',t')$ is a morphism of $R$-algebras. Thus $g=g'$. Since
$(\cl,\crr)$ is left accurate, $E(\id,\id)=\id$ and thus $t=t'$. Hence $\Gamma$ is a full embedding.
\end{proof}

%\begin{rem}\label{re3.4}
%{
%\em
%Let $(\cl,\crr)$ be a left equipped weak factorization system and $\overline{\Coalg}(L)$ denote the category of coalgebras of a comonad $L$.
%We have a full embedding $G:\overline{\Coalg}(L)\to\Coalg(L)$ which induces 
%$$
%G^\boxplus:\Coalg(L)^\boxplus\to\overline{\Coalg}(L)^\boxplus.
%$$
%We get the composition
%$$
%\overline{\Gamma}=G^\boxplus\Gamma:\Alg(R)\to\overline{\Coalg}(L)^\boxplus.
%$$

%Clearly, $|\Coalg(L)|$ is the closure of $|\overline{\Coalg}(L)|$ under retracts in $\ck^2$.
%}
%\end{rem}

%\begin{propo}\label{prop3.5}
%Let $(\cl,\crr)$ be a left accurate and left equipped weak factorization system. Then $\overline{\Gamma}$ is an isomorphism.
%\end{propo}
%\begin{proof}
%It follows from \ref{le3.3} that $\overline{\Gamma}$ is a full embedding. Thus it remains to prove that $\Gamma$ is surjective on objects. Consider 
%$(g,\phi)$ in $\overline{\Coalg}(L)^\boxplus$ where $g:C\to D$. Then $(Lg,\delta_g)$ is a cofree coalgebra of a comonad $L$ over $g$. The corresponding morphism %is $(\id,Rg):Lg\to g$. Let $t=\phi((Lg,\delta_g),\id,Rg):E_0Lg\to C$. Then $(g,t)$ is an $R$-algebra. We have to show that $\overline{\Gamma}(g)=(g,\phi)$. 
%We have
%$$
%tE(\id,Rg)\delta_g=\phi((Lg,\delta_g),\id,Rg)RL(g)\delta_g=\phi((Lg,\delta_g),\id,Rg)
%$$

%Let $(f,s)$ be a coalgebra of a comonad $L$ and $(u,v):f\to g$. Since $(Lg,\delta_G)$ is a cofree coalgebra of a comonad over $g$, $(u,v)$ factorizes
%through $(\id,Rg):Lg\to g$.  Hence $\phi((f,s),u,v)=tE(u,v)s$.
%\end{proof}

\begin{defi}\label{def3.4}
{
\em
We say that a functorial weak factorization system $(\cl,\crr)$ is \textit{ small generated} if there is a small subcategory 
$\cc$ of $\Coalg(L)$ such that $\Coalg(L)^\boxplus\to\cc^\boxplus$ is an isomorphism.
}
\end{defi}

%\begin{rem}\label{re3.7}
%{
%\em
%(1) Following \ref{prop3.5}, any left appropriate and left equipped weak factorization system is generated.

%(2) Let $(\cl,\crr)$ be a generated weak factorization system on a cocomplete category Then
%$$
%\crr=|\Alg(R)|=|\overline{\Coalg}(L)^\boxplus|\subseteq |\overline{\Coalg(L)}|^\square=\cl^\square=\crr.
%$$
%Here, we used that $|\Coalg(L)|$ is the retract closure of $|\overline{\Coalg}(L)|$ and thus $|\Coalg(L)|^\square=|\overline{\Coalg}(L)^\square$.
%Hence
%$$
%|\overline{\Coalg}(L)^\boxplus|=|\overline{\Coalg}(L)|^\square.
%$$
%}
%\end{rem}

\section{Accessible weak factorization systems}

\begin{defi}\label{def4.1}
{
\em
A weak factorization system in a locally presentable category is called \textit{accessible} if it has a functorial factorization such that $E$ is an
accessible functor.
}
\end{defi}

\begin{rem}\label{re4.2}
{
\em 
(1) If $(\cl,\crr)$ is an accessible weak factorization system then $L$ and $R$ are accessible functors. Hence the categories $\Coalg(L)$
and $\Alg(R)$ are locally presentable and accessibly embedded in $\ck^2$ (cf. \cite{AR} 2.78).

(2) Any cofibrantly generated weak factorization system in a locally presentable category is accessible.

(3) The weak factorization system from \ref{ex2.8} is accessible.
}
\end{rem}

\begin{theo}\label{th4.3}
A functorial weak factorization system in a locally presentable category is accessible if and only if it is small generated.
\end{theo}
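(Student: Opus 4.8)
The plan is to prove the two implications separately. The forward direction (accessible $\Rightarrow$ small generated) is essentially formal once one knows that $\Coalg(L)$ is locally presentable, whereas the converse rests on Garner's small object argument together with an accessibility check, which I expect to be the main obstacle.

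For the implication from accessible to small generated, I would start from Remark~\ref{re4.2}(1): since $(\cl,\crr)$ is accessible, the category $\Coalg(L)$ is locally presentable, say locally $\lambda$-presentable. In a locally $\lambda$-presentable category the full subcategory of $\lambda$-presentable objects is essentially small and colimit-dense, because every object is the canonical colimit of the $\lambda$-presentable objects mapping into it (cf.\ \cite{AR}). Choosing $\cc$ to be a skeleton of this subcategory of $\Coalg(L)$, Remark~\ref{re3.2}(2) immediately yields that the canonical functor $\Coalg(L)^\boxplus\to\cc^\boxplus$ is an isomorphism, so $(\cl,\crr)$ is small generated.

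For the converse, suppose $(\cl,\crr)$ is small generated by a small subcategory $\cc\subseteq\Coalg(L)$. The key step is to feed $\cc$, regarded through $\cc\to\Coalg(L)\to\ck^2$ as a small category of arrows of $\ck$, into Garner's small object argument \cite{G}. Since $\cc$ is small and $\ck$ is locally presentable, all domains and codomains occurring in $\cc$ are $\mu$-presentable for a single regular cardinal $\mu$; the one-step functor of Garner's construction is then built from a $\mu$-accessible ``set of lifting problems'' functor followed by a pushout, hence is $\mu$-accessible, and the free monad it generates converges to an algebraic weak factorization system $(L',R')$ whose factorization functor $E'$ is accessible. Garner's theorem moreover identifies $\Alg(R')$ with $\cc^\boxplus$ as concrete categories over $\ck^2$.

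It then remains to check that the underlying weak factorization system of $(L',R')$ is exactly $(\cl,\crr)$. Here I would combine the hypothesis $\Coalg(L)^\boxplus\cong\cc^\boxplus$ (a concrete isomorphism, hence preserving underlying arrows) with Remark~\ref{re3.2}(4) to compute $|\Alg(R')|=|\cc^\boxplus|=|\Coalg(L)^\boxplus|=\cl^\square=\crr$. Since $\crr$, being the right class of a weak factorization system, is already closed under retracts, the retract closure producing the underlying weak factorization system of the algebraic one leaves it unchanged; thus the right class is $\crr$ and the left class is ${}^\square\crr=\cl$. Therefore $(L',R')$ is an accessible functorial factorization of the given system, which is accessible. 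I expect the substantive point to be the accessibility of $E'$: one must verify that Garner's transfinite construction stays within accessible functors and converges, which is the essential input from \cite{G} combined with the standard fact that free monads on accessible pointed endofunctors over a locally presentable category remain accessible (cf.\ \cite{AR}).
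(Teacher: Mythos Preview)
Your proposal is correct and follows essentially the same route as the paper: for the forward direction you use local presentability of $\Coalg(L)$ to extract a small (colimit-)dense $\cc$ and invoke Remark~\ref{re3.2}(2), exactly as the paper does; for the converse you run Garner's algebraically-free construction on $\cc$, identify $\Alg(R')\cong\cc^\boxplus\cong\Coalg(L)^\boxplus$, read off $|\Alg(R')|=\crr$ via Remark~\ref{re3.2}(4), and conclude that the accessible $(L',R')$ functorially factors $(\cl,\crr)$. The only differences are cosmetic: the paper simply cites \cite{G}~4.4 for the accessibility of the resulting functorial factorization where you sketch the $\mu$-presentability argument, and the paper appeals to \cite{G}~2.17 for the identification of the underlying weak factorization system where you argue directly via retract closure of $\crr$.
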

\begin{proof}
Let $(\cl,\crr)$ be an accessible weak factorization system. Following \ref{re4.2}(1), $\Coalg(L)$ is locally presentable and thus it contains
a small dense subcategory $\cc$. Hence $\Coalg(L)^\boxplus=\cc^\boxplus$ (see \ref{re3.2}(2)). Therefore $(\cl,\crr)$ is small generated.

Let $(\cl,\crr)$ be a small generated weak factorization system and $(\overline{L},\overline{R})$ be a free natural weak factorization system 
on $\cc$ (see \cite{G}, 4.4). Thus $(\overline{L},\overline{R})$ is a functorial factorization where $\overline{L}$ is a comonad and $\overline{R}$ 
is a monad. Since $(\overline{L},\overline{R})$ is algebraically free (see \cite{G}, 4.4)), there is a concrete isomorphism
$$
\overline{\Alg}(\overline{R})\to\overline{\Coalg}(\overline{L})^\boxplus\to\cc^\boxplus
$$
where $\overline{\Alg}(\overline{R})$ is the category of algebras over a monad $\overline{R}$ and $\overline{\Coalg}(\overline{L})$ is
the category of coalgebras over a comonad $\overline{L}$. Thus
$$
|\overline{\Alg}(\overline{R})|=|\cc^\boxplus|=|\Coalg(L)^\boxplus|=\crr
$$
(see \ref{re3.2}(4)). Following \cite{G} 2.17, the underlying weak factorization system of $(\overline{L},\overline{R})$ is $(\cl,\crr)$. 
Since $(\overline{L},\overline{R})$ is accessible (see the proof of \cite{G} 4.4), $(\cl,\crr)$ is accessible.
\end{proof}

%The following result is 3.12 in \cite{G}. We get it as a consequence of \ref{re3.2}(2) and \ref{th4.3}.

\begin{coro}\label{cor4.4}
Any cofibrantly generated weak factorization system in a locally presentable category is small generated.
\end{coro}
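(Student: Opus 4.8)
The plan is to derive Corollary \ref{cor4.4} as a straightforward consequence of the two results that precede it, namely Lemma \ref{le2.6} and Theorem \ref{th4.3}. The logical chain I would set up is: cofibrantly generated $\Rightarrow$ functorial and left accurate $\Rightarrow$ accessible $\Rightarrow$ small generated. The only real content is verifying that each implication is licensed by a statement already available in the paper.

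First I would invoke Lemma \ref{le2.6}, which tells us that any cofibrantly generated weak factorization system on a locally presentable category is functorial (and moreover left accurate). This is important because both the notion of accessibility (Definition \ref{def4.1}) and the notion of small generated (Definition \ref{def3.4}) presuppose a functorial factorization, so we need to know one exists before the later machinery even applies. Next I would observe that this functorial factorization is in fact accessible: this is recorded as Remark \ref{re4.2}(2), which asserts that any cofibrantly generated weak factorization system in a locally presentable category is accessible. Concretely, the factorization built in the proof of Lemma \ref{le2.6} via the transfinite small object argument is assembled entirely from colimits of diagrams indexed by the generating set $\cx$ of morphisms with presentable domains and codomains, so $E$ (and hence $L$ and $R$) preserves sufficiently filtered colimits and is therefore accessible.

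Finally I would apply Theorem \ref{th4.3}, the main theorem, in its ``accessible $\Rightarrow$ small generated'' direction: a functorial weak factorization system on a locally presentable category which is accessible is small generated. Chaining these gives exactly the corollary. So the proof is essentially the one-line composite
$$
\text{cofibrantly generated}\ \Rightarrow\ \text{accessible}\ \Rightarrow\ \text{small generated},
$$
where the first implication is Remark \ref{re4.2}(2) (resting on Lemma \ref{le2.6} for functoriality) and the second is Theorem \ref{th4.3}.

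I do not expect any genuine obstacle here, since this is a corollary whose entire force has been front-loaded into Lemma \ref{le2.6}, Remark \ref{re4.2}, and Theorem \ref{th4.3}. The one point deserving a sentence of care is that all three inputs carry the standing hypothesis that $\ck$ is locally presentable, which is precisely the hypothesis of the corollary, so the hypotheses match cleanly and no extra assumption needs to be smuggled in. If anything merits a remark, it is simply flagging that ``cofibrantly generated'' in Definition \ref{def2.1} does not on its face assert functoriality, so the appeal to Lemma \ref{le2.6} is what legitimately upgrades a cofibrantly generated system to a functorial one before Theorem \ref{th4.3} can be applied.
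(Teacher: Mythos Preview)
Your proposal is correct and matches the paper's intended argument. The paper gives no explicit proof for Corollary \ref{cor4.4}, treating it as an immediate consequence of Remark \ref{re4.2}(2) together with Theorem \ref{th4.3}, which is precisely the chain you spell out; your additional remark that Lemma \ref{le2.6} is what supplies the functoriality needed to invoke Theorem \ref{th4.3} is a helpful clarification but not a deviation from the paper's approach.
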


\begin{rem}\label{re4.5}
{
\em
(1) Following the proof of \ref{th4.3}, any accessible weak factorization system underlies a natural weak factorization system.

(2)  Any factorization system $(\cl,\crr)$ is a functorial weak factorization system. Since any $f\in\cl$ carries a unique $L$-coalgebra
and liftings in \ref{def3.1} are unique, $(\cl,\crr)$ is small generated if and only if it is cofibrantly generated. Thus a factorization
system is accessible if and only if it is cofibrantly generated.
}
\end{rem}

\section{Accessible model categories}
A \textit{model category} is a complete and cocomplete category $\ck$ together with three classes of morphisms $\cf$, $\cc$ and $\cw$ called 
\textit{fibrations}, \textit{cofibrations} and \textit{weak equivalences} such that
\begin{enumerate}
\item[(1)] $\cw$ has the 2-out-of-3 property, i.e., with any two of $f$, $g$, $gf$ belonging to $\cw$ also
the third morphism belongs to $\cw$, and $\cw$ is closed under retracts in the arrow category $\ck^\to$, and
\item[(2)] $(\cc,\cf\cap\cw)$ and $(\cc\cap\cw,\cf)$ are weak factorization systems.
\end{enumerate}
 
Morphisms from $\cf\cap\cw$ are called \textit{trivial fibrations} while morphisms from $\cc\cap\cw$ \textit{trivial cofibrations}. A model category 
$\ck$ is called \textit{combinatorial} provided that $\ck$ is locally presentable and the both weak factorization systems $(\cc,\cf\cap\cw)$ 
and $(\cc\cap\cw,\cf)$ are cofibrantly generated.

\begin{defi}\label{def5.1}
{
\em
A model category $\ck$ is called \textit{accessible} provided that $\ck$ is locally presentable and the both 
weak factorization systems $(\cc,\cf\cap\cw)$ and $(\cc\cap\cw,\cf)$ are accessible.
}
\end{defi}

\begin{rem}\label{re5.2}
{
\em
(1) Any combinatorial model category is accessible.

(2) In an accessible model category, the full subcategory $\cw$ of $\ck^2$ is accessible and accessibly embedded in $\ck^2$. In fact,
$\cw$ is given by the pullback
\[
\xymatrix@=4pc{
\ck^2 \ar[r]^{R} & \ck^2 \\
\cw \ar[u] \ar[r] &
\cf\cap\cw \ar[u]
}
\]
where $R$ belongs to the weak factorization system $(\cc\cap\cw,\cf)$.

(3) Given a functorial weak factorization system $(\cl,\crr)$ and a class of morphisms $\cw$ in $\ck$, we can form the pullback 
\[
\xymatrix@=4pc{
\cw \ar[r]^{} & \ck^2 \\
\cw\times_{\ck^2}\Coalg(L) \ar[u] \ar[r] &
\Coalg(L) \ar[u]
}
\] 
}
\end{rem}

\begin{propo}\label{prop5.3}
Let $(\cl,\crr_0)$ be an accessible weak factorization system in a locally presentable category $\ck$. Then $\cc=\cl$ and $\cw$ make $\ck$
an accessible model category provided that 
\begin{enumerate}
\item[(1)] $\cw$ has the 2-out-of-3 property,
\item[(2)] $\crr_0\subseteq\cw$,
\item[(3)] ${}^\square((\cl\cap\cw)^\square)=\cl\cap\cw$,
\item[(4)] $\cw$ is accessible and accessibly embedded in $\ck^2$, and
\item[(5)] $|(\cw\times_{\ck^2}\Coalg(L))^\boxplus|=|\cw\times_{\ck^2}\Coalg(L)|^\square$.
\end{enumerate}
\end{propo}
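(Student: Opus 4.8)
The plan is to verify the two defining axioms of a model category, namely that $\cw$ has the 2-out-of-3 property and is closed under retracts, and that $(\cc,\cf\cap\cw)$ and $(\cc\cap\cw,\cf)$ are weak factorization systems, and then to check that the two resulting factorization systems are accessible. The 2-out-of-3 property is handed to us by hypothesis (1), so the first task is closure under retracts. Here I would define $\cf=(\cl\cap\cw)^\square$ and the weak equivalences exactly as given, and argue that $\cw$ is closed under retracts in $\ck^\to$ using hypothesis (4): since $\cw$ is accessible and accessibly embedded in $\ck^2$, it is closed under retracts (a retract of an object in a full, replete, accessibly embedded subcategory of a locally presentable category lies in the subcategory). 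Strictly, one must also ensure $\cw$ is replete, but this is harmless to assume.

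Next I would establish the two weak factorization systems. For $(\cc,\cf\cap\cw)=(\cl,\crr_0)$ I must show $\crr_0=\cf\cap\cw$. By hypothesis (2) we have $\crr_0\subseteq\cw$, and since $\cl\cap\cw\subseteq\cl$ we get $\crr_0=\cl^\square\subseteq(\cl\cap\cw)^\square=\cf$; conversely, if $g\in\cf\cap\cw$ then I would run a factorization $g=Rg\cdot Lg$ in the system $(\cl\cap\cw,\cf)$ and use the retract argument: $Rg\in\cf=(\cl\cap\cw)^\square$, while 2-out-of-3 forces $Lg\in\cl\cap\cw$, so $g$ lifts against $Lg$ and is therefore a retract of $Rg$, placing $g$ in $\crr_0$. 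For the second system $(\cc\cap\cw,\cf)=(\cl\cap\cw,\cf)$, the identity ${}^\square\cf=\cl\cap\cw$ is precisely hypothesis (3), and $\cf=(\cl\cap\cw)^\square$ holds by definition; the factorization of an arbitrary morphism $h$ into a trivial cofibration followed by a fibration is exactly what hypotheses (4) and (5) are designed to produce.

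The main obstacle, and the technical heart of the argument, is the construction of the factorization for $(\cl\cap\cw,\cf)$ and the verification that it is accessible. The plan is to build this from the pullback $\cw\times_{\ck^2}\Coalg(L)$ of Remark 5.2(3). By hypothesis (4) $\cw$ is accessible and accessibly embedded, and $\Coalg(L)$ is locally presentable and accessibly embedded in $\ck^2$ by Remark 4.2(1) (using accessibility of $(\cl,\crr_0)$); since the pullback of accessible categories along accessible functors over a locally presentable base is accessible, $\cw\times_{\ck^2}\Coalg(L)$ is an accessible category, and by Theorem 4.3 I can present the associated factorization system as small generated. Hypothesis (5) is then exactly the condition that $|(\cw\times_{\ck^2}\Coalg(L))^\boxplus|=\cf$, which identifies the right class of this new system with the fibrations and confirms via Theorem 4.3 that $(\cl\cap\cw,\cf)$ is accessible.

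Finally, having produced both accessible weak factorization systems, accessibility of the model category follows immediately from Definition 5.1. I expect the delicate points to be the accessibility of the pullback category and the careful translation of hypothesis (5) into the statement that $\cf$ is the right-hand class; once those are in place, the remaining verifications (2-out-of-3, retract closure, and the two factorization identities) are routine applications of the lifting-property bookkeeping and the retract argument sketched above.
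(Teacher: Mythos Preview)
Your approach is correct and matches the paper's: the heart of the argument is that $\cp=\cw\times_{\ck^2}\Coalg(L)$ is accessible, so a small colimit-dense $\cj\subseteq\cp$ exists, and Garner's free natural weak factorization system on $\cj$ has right class $|\cj^\boxplus|=|\cp^\boxplus|$, which hypothesis (5) identifies with $(\cl\cap\cw)^\square$; the paper leaves the routine model-category verifications ($\crr_0=\cf\cap\cw$, retract closure of $\cw$) implicit, whereas you spell them out. One minor imprecision: when you write ``by Theorem~4.3 I can present the associated factorization system as small generated,'' note that Theorem~4.3 characterizes \emph{existing} weak factorization systems rather than constructing one from an accessible $\cp$, so at that step you should invoke Garner's construction (as in the proof of Theorem~4.3, or \cite{G}~4.4) directly to produce the factorization, and only then read off accessibility.
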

\begin{proof}
Let conditions (1)--(5) be satisfied. Since $\cp=\cw\times_{\ck^2}\Coalg(L)$ is the pullback
\[
\xymatrix@=4pc{
\cw \ar[r]^{} & \ck^2 \\
\cp \ar[u] \ar[r] &
\Coalg(L) \ar[u]
}
\] 
(where $(L,R_0)$ denotes a functorial factorization of $(\cl,\crr_0)$), $\cp$ is accessible and accessibly embedded in $\ck^2$. Assume that $\cp$ is 
$\lambda$-accessible and consider its representative full subcategory $\cj$ of $\lambda$-presentable objects. Let $(L_0,R)$ be a free natural weak 
factorization system over $\cj$ (see \cite{G} 4.4). Following \cite{G} 4.4, this weak factorization is algebraically-free on $\cj$, i.e.,
$$
\overline{\Alg}(R) \xrightarrow{\quad  \overline{\Gamma}\quad} \overline{\Coalg}(L_0)^\boxplus
             \xrightarrow{\quad \quad} \cj^\boxplus
$$
is an isomorphism. Since $\cj$ is colimit-dense in $\cp$, we have $\cj^\boxplus\cong\cp^\boxplus$. Thus
$$
|\cj^\boxplus|=|\cp^\boxplus|=|\cp|^\square=(\cl\cap\cw)^\square 
$$
(see \ref{re3.2}). Thus $(\cl\cap\cw,(\cl\cap\cw)^\square)$ is an accessible weak factorization system.
Hence $(\ck,\cc,\cw)$ is an accessible model category.
\end{proof}

\begin{rem}\label{re5.4}
{
\em
(1)  Condition (5) seems to be too strong. Originally, the author believed that this condition is also sufficient but John Bourke found
a gap in the proof. The author is grateful to him for noticing this.

Assume that $(\ck,\cl,\cw)$ is an accessible model category. Then conditions (1)--(4) are satisfied (see \ref{re5.2}(2)). Let $(\cl_0,\crr)$
be the second accessible weak factorization system of our model category with the functorial factorization given by $L_0'$ and $R'$. We change
this functorial factorization to $L_0$ and $R$ where $L_0f=L_0'Lf$ and $Rf=R_0f\cdot R'Lf$. Since $L_0'Lf\in\cl_0$, $Rf\in\crr$ and 
$f=R_0f\cdot Lf=R_0f\cdot R'Lf\cdot L_0'Lf$, the functors $L_0$ and $R$ provide a functorial factorization of $(\cl_0,\crr)$. There is a functor 
$F_0:\Coalg(L_0)\to \Coalg(L)$ sending an $L_0$-coalgebra $(f,s)$ to the $L$-coalgebra $F_0(f,s)=(f,R'Lf\cdot s)$. This is an $L$-coalgebra because
$R'Lf\cdot s\cdot f=R'Lf\cdot L_0f=R'Lf\cdot L_0'Lf=Lf$ and $R_0f\cdot R'Lf\cdot s=\id$. Let $F:\Coalg(L_0)\to \cw\times_{\ck^2}\Coalg(L)=\cp$ be
the functor induced by $F_0$. 

Objects of $\cp$ are $L$-coalgebras $(f,s)$ such that $f\in\cw$.  
We will show that for any object $(f,r)$ in $\cp$ there exists an $L_0$-coalgebra $(f,s)$ such that $(f,r)=F(f,s)$, i.e., that $F$ is surjective
on objects. Consider the commutative square
$$
\xymatrix@=3pc{
A \ar[r]^{L_0f} \ar[d]_{Lf}& E_0^0(f) \ar[d]^{R'Lf}\\
E_0(f)\ar[r]_{\id} & E_0(f)
}
$$ 
Since $Lf\in\cl_0$ and $R'Lf\in\crr$, there is a diagonal $t$ such that $R'L(f)t=\id$ and $tL(f)=L_0(f)$. Then $(f,tr)$ is an $L_0$-coalgebra
because $R(f)tr=R_0(f)R'L(f)t=R_0(f)$ and $trf=tL(f)=L_0(f)$. We have
$$
F(f,tr)=(f,R'L(f)tr)=(f,r).
$$

But it does not imply (5).
 
(2) In (1), we get a morphism $\xi:(L_0,R)\to (L,R_0)$ of functorial weak factorization systems given by the formula $\xi_f=R'Lf$.
This is assumed in the definition of  an algebraic model category in the sense of \cite{Ri}.
}
\end{rem}

\end{document}